\def\bd{\mathrm{bd}}
\def\inn{\mathrm{int}}
\def\bs{{{\bigskip}}}
\def\cb{{{convex body}}}
\def\cbs{{{convex bodies}}}
\def\R{{{\rm I\! R}^d}}
\def\Rp{{{\rm I\! R}^2}}
\def\Rs{{\mathbb{R} ^3}}
\def\o{{{\bf 0}}}
\def\conv{{\rm conv}}
\newtheorem{thm}{Theorem}[section]
\newtheorem{lm}[thm]{Lemma}
\newtheorem{co}[thm]{Corollary}
\newtheorem{rmk}[thm]{Remark}
\begin{document}

\title{\bf Locating diametral points}

\author{Jin-ichi Itoh \and Costin V\^{\i}lcu \and Liping Yuan \and Tudor Zamfirescu}


\maketitle

\begin{abstract}

Let $K$ be a convex body in $\mathbb{R} ^d$, with $d = 2,3$. We
determine sharp sufficient conditions for a set $E$ composed of $1$,
$2$, or $3$ points of ${\rm bd}K$, to contain at least one endpoint
of a  diameter of $K$ (for $d=2,3$).
We extend this also to convex surfaces, with their intrinsic metric.
Our conditions are upper bounds on the sum of the complete angles
 at the points in $E$.  We also show that
such criteria do not exist for $n\geq 4$ points.

\smallskip

AMS Math. Subj. Classification (2010): 52{\sc A}10, 52{\sc A}15,
53{\sc C}45.

\smallskip

Key words: convex body, diameter, geodesic diameter, diametral point

\end{abstract}


\section {Introduction}

\medskip

The tangent cone at a point $x$ in the boundary bd$K$ of a convex
body $K$ can be defined using only neighborhoods of $x$ in bd$K$.
So, one doesn't normally expect to get global information about $K$
from the size of the tangent cones at one, two or three points.
Nevertheless, in some cases this is what happens!

\medskip

A {\it convex body} $K$ in $\mathbb{R} ^d$ is a compact convex set
with interior points in $\mathbb{R} ^d$; we shall consider only the
cases $d = 2,3$. A {\it convex surface} in $\mathbb{R} ^3$ is the
boundary of a convex body in $\mathbb{R} ^3$.

Let $S$ be a convex surface  and $x$ a point  in $S$. Consider
homothetic dilations of $S$ with the centre at $x$ and coefficients
of homothety tending to infinity. The limit surface is
called the {\it  tangent cone at} $x$ (see \cite{al}), and is
denoted by $T_x$.

If $K$ is a planar convex body then a tangent cone is an angle, and
its measure is the angle-measure.

If $K$ is a convex body in $\mathbb{R} ^3$ then the tangent cone at
$x \in {\rm bd}K$
can be unfolded in the plane, producing an angle the measure of
which is {\it the complete angle} at $x$, denoted by $\theta_x$.

\medskip


Denote by $\rho$ the intrinsic metric on the convex surface $S$
(which is derived from the ambient Euclidean distance).

We shall  call {\it diameter} each line-segment in $K$, or arc in
$S$, of length equal to the
 extrinsic, respectively intrinsic, maximal distance
between pairs of points in $K$ or in $S$.


An endpoint of some (intrinsic or extrinsic) diameter is called an
(intrinsic, respectively extrinsic) {\it diametral point}.

In this paper, we provide criteria for finding extrinsic diametral
points in convex bodies $K \subset \mathbb{R} ^d$, $d=2,3$, and
criteria for finding intrinsic diametral points in convex surfaces
$S={\rm bd}K \subset \mathbb{R} ^3$. Our criteria consist of upper
bounds on the sum of the
complete angles  at $1$, $2$, or $3$ points.

We also show that such criteria do not exist for $n\geq 4$ points.

\medskip

Related to our results in Section \ref{sf3} is the following one,
obtained by J. Itoh and C. V\^\i lcu \cite{iv-mn}. Each point $y$ in
a convex surface $S$ with complete angle $\theta _y \leq \pi$ is a
farthest point on $S$, i.e., $y$ is at maximal intrinsic distance
from some point  in $S$.

\medskip

Passing from planar convex bodies to convex surfaces is not always
obvious. For example, while the  diameter of a convex polygon $P$
(in the plane, diameter means extrinsic diameter) with $n$ vertices
can be computed in time $O(n)$ \cite{sh}, the intrinsic diameter of
a convex polyhedral surface in $\Rs$ with $n$ vertices can be
computed in time $O(n^8 \log n)$ \cite{agr}.

Also, it is well-known that  diameters of  convex polygons must join
vertices, but this is not always true for  geodesic diameters of
convex polyhedral surfaces \cite{osc}.





\medskip

There is a nice connection between the lengths of extrinsic and
intrinsic diameters of a convex surface, considered by several
authors, see \cite{ma}, \cite{mak}, \cite{zal}: for any convex
surface $S$, the former is not larger than $\pi/2$ times the latter,
and equals it
if and only if $S$ is a  surface of revolution of constant
width.

Our results provide another connection.
The endpoints of extrinsic and intrinsic diameters of convex bodies
and surfaces are in general distinct; yet, in some cases, they can
be found in the same set, see the remarks at the end of the paper.

\bs
 A pair of points {\it sees a line-segment under the angle} $\alpha$
if the sum of the two angles under which they see the line-segment
equals $\alpha$.

Let $\sigma$ be an extrinsic diameter of the convex body $K$.  A
pair of points $u, v \in K \setminus \sigma$ is said to be a
$\sigma${\it -separated pair} if the line-segment $uv$  meets
$\sigma$ \cite{z-diam}.

A {\it  segment} on the convex surface $S$ is an arc (path) on $S$
realizing the intrinsic distance between its endpoints. If $\sigma$
is an intrinsic diameter, i.e. a longest segment, of  $S$, then a
pair of points $u, v \in S \setminus \sigma$ is said to be
$\sigma${\it -separated} if some segment from $u$ to $v$ meets
$\sigma$ \cite{z-diam}.

For $M\subset\R$, we denote by $\overline{M}$ its affine hull, by
$\inn M$ the relative interior of $M$ (i.e., in the topology of
$\overline{M}$) and by $\bd M$ the relative boundary of $M$.

For distinct $x,y\in \R$, let $xy$ be the line-segment from $x$ to
$y$; thus, $\overline{xy}$ is the line through $x, y$. We put
$x_1...x_n = \conv \{x_1,..., x_n\}.$


\section {Planar convex bodies}

\label{planar}

Let $K$ be a planar convex body and $x$ a boundary point of $K$.

We denote  by $X$ the angle of bd$K$ at $x$ towards $K$ (so $X \leq
\pi$), and keep this habit for any boundary point; so, $Y$ is the
angle at $y$, and so on.

\medskip





We shall repeatedly use the next result.

\begin{lm}[T. Zamfirescu \cite{z-diam}]

\label{view}

For any diameter $uv$ of a planar convex body, every $uv$-separated
pair sees $uv$ under an angle not less than $5\pi/6$.

\end{lm}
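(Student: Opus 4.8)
The plan is to convert the statement into a bound on how $u$ and $v$ see the segment $pq$, and then to pin down the extremal configuration. Put $D:=|uv|$; for a $uv$-separated pair $\{p,q\}$ the segment $pq$ meets $uv$, and since $uv$ is a diameter we have $|up|,|uq|,|vp|,|vq|,|pq|\le D$. First I would check that $pq$ and $uv$ meet at a point $w$ lying strictly between $p$ and $q$ and strictly between $u$ and $v$: a meeting point in $\{p,q\}$ would put a point of $\{p,q\}$ on $uv$, and if $w$ were an interior point of $pq$ equal to, say, $u$, then writing $w=(1-s)p+sq$ with $s\in(0,1)$ the identity $|vw|^2=(1-s)|vp|^2+s|vq|^2-s(1-s)|pq|^2$ together with $|vp|,|vq|\le D=|vw|$ would give $|pq|=0$. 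Thus $upvq$ is a convex quadrilateral with diagonals $uv$ and $pq$, and summing the angle-sum identities of the four triangles $upw,vpw,uqw,vqw$ (using collinearity of $p,w,q$ and of $u,w,v$) yields
\[
\angle upv+\angle uqv=2\pi-\angle puq-\angle pvq.
\]
So it suffices to prove $\angle puq+\angle pvq\le 7\pi/6$.

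To find the extremal configuration I would slide $p$ and $q$. Moving $p$ outward along the line $\overline{pq}$, away from $w$, strictly decreases $\angle upw$ and $\angle wpv$, hence $\angle upv$, while $\angle uqv$ and the crossing point (which depends only on the lines $\overline{pq},\overline{uv}$ and on $q$) stay fixed; similarly for $q$. Pushing $p$ and then $q$ out as far as the constraints permit, we reach a configuration with $\angle upv+\angle uqv$ no larger, in which $p$ lies on $\partial B(u,D)$ or on $\partial B(v,D)$ or $|pq|=D$, and likewise for $q$. The two "mixed" outcomes $|vp|=|uq|=D$ and $|up|=|vq|=D$ cannot occur: with the remaining four distances $\le D$, a law-of-cosines computation forces $|pq|>D$. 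And the residual case $|pq|=D$ with $p,q$ interior to the lens $\bar B(u,D)\cap\bar B(v,D)$ is cut down further by rotating the rigid segment $pq$ about an endpoint, which again decreases $\angle upv+\angle uqv$ until a second distance constraint becomes tight. Thus, up to interchanging $u$ and $v$, it remains to treat $|vp|=|vq|=D$.

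In that case the triangles $uvp,uvq$ are isosceles, so $\angle upv=\angle vup=:\alpha_p$ and $\angle uqv=\angle vuq=:\alpha_q$, with $\angle uvp=\pi-2\alpha_p$ and $\angle uvq=\pi-2\alpha_q$. Since the side $up$ is opposite the angle $\pi-2\alpha_p$ and the side $uv=D$ opposite $\alpha_p$, the inequality $|up|\le D$ gives $\pi-2\alpha_p\le\alpha_p$, i.e. $\alpha_p\ge\pi/3$; likewise $\alpha_q\ge\pi/3$. As $p$ and $q$ lie on opposite sides of the line $uv$, $\angle pvq=\angle pvu+\angle uvq=2\pi-2(\alpha_p+\alpha_q)$; and since $vp=vq=D$ we have $|pq|=2D\sin\bigl(\tfrac12\angle pvq\bigr)$, so $|pq|\le D$ gives $\angle pvq\le\pi/3$, that is $\alpha_p+\alpha_q\ge 5\pi/6$. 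Hence $\angle upv+\angle uqv=\alpha_p+\alpha_q\ge 5\pi/6$, as claimed. Equality forces $|pq|=D$ as well, so the bound is attained exactly when one of the triangles $upq,vpq$ is equilateral of side $D$, with the fourth point on the corresponding circular arc; in particular $5\pi/6$ is sharp.

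The real obstacle is the second paragraph — making the reduction rigorous. One has to know the minimum of $\angle upv+\angle uqv$ is attained at all (the admissible set is not evidently closed, since in a limit the crossing point can escape through an endpoint of $uv$ or of $pq$), to settle the "mixed" cases cleanly, and to dispose of the leftover case $|pq|=D$ with $p,q$ interior to the lens, all while bookkeeping exactly which of the six distance constraints are tight. Once the correct extremal configuration is identified, the crossing-point identity and the isosceles computation are routine.
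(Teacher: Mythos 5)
The paper itself does not prove Lemma~\ref{view}; it imports the statement from \cite{z-diam}. So your attempt can only be judged on its own terms, and on those terms it is a correct skeleton with a genuine hole in the middle. The parts that work: the reduction to a four-point metric problem (all six distances among $u,v,p,q$ are at most $D=\|u-v\|$ because the four points lie in a body of diameter $D$, and conversely such a configuration sits inside its own convex hull, which has diameter $uv$); the Stewart-type identity ruling out the crossing point coinciding with $u$ or $v$; the quadrilateral identity $\angle upv+\angle uqv=2\pi-\angle puq-\angle pvq$; and the closing computation in the case $\|v-p\|=\|v-q\|=D$, which correctly produces the bound $5\pi/6$ and the correct extremal configuration (an equilateral triangle $vpq$ of side $D$, matching the sharpness example given after Theorem~\ref{23}).

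The gap is the reduction in your second paragraph, and it is not mere bookkeeping. After sliding $p$ and then $q$ outward, the active-constraint patterns are not only the ones you enumerate: for instance $\|v-p\|=D$ together with $\|p-q\|=D$, with nothing tight at $q$ relative to $u$ and $v$, is a genuinely distinct case that your case list skips. The exclusion of the mixed cases ($\|v-p\|=\|u-q\|=D$) is asserted, not proved, and the relevant inequality $\|p-q\|>D$ degenerates to equality exactly on the boundary configurations ($p\to u$ or $q\to v$) that the sliding pushes you toward, so strictness has to be argued, not assumed. Finally, the proposed rigid rotation of $pq$ in the residual case is not shown to decrease $\angle upv+\angle uqv$: as $p$ moves on a circle about $q$, the angle $\angle upv$ is constant on circular arcs through $u$ and $v$ and has interior critical points where the two circles are tangent, so ``rotate until a second constraint becomes tight'' can stall. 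Until the full case analysis of tight constraints is carried out, this is a plan for a proof rather than a proof.
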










\begin{lm}

\label{4}

Assume in the convex quadrilateral $Q=xyzw$ we have $X + Y \leq
\pi$. Then at least one of the vertices $x,y$ is a diametral point
of $Q$.

\end{lm}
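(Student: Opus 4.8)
The goal is to show that if $X+Y\le\pi$ in a convex quadrilateral $Q=xyzw$, then one of $x,y$ realizes the diameter of $Q$. I would argue by contradiction: suppose neither $x$ nor $y$ is diametral, and let $uv$ be a diameter of $Q$. Since $Q$ is a quadrilateral, $u$ and $v$ must be vertices of $Q$ (diameters of polygons join vertices), so $\{u,v\}\subseteq\{x,y,z,w\}$, and by assumption $\{u,v\}\ne\{x,y\}$ and neither $u$ nor $v$ lies in $\{x,y\}$ unless... — actually I need to be careful: the assumption only says $x$ is not diametral and $y$ is not diametral, so $uv$ uses neither $x$ nor $y$, forcing $\{u,v\}=\{z,w\}$. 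Thus $zw$ is the unique candidate, i.e. $|zw|=\mathrm{diam}(Q)$ and in particular $|zw|\ge|xz|,|xw|,|yz|,|yw|,|xy|$.

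**Using the angle condition via Lemma~\ref{view}.** Now I want to locate $x$ and $y$ relative to the diameter $\sigma=zw$. The natural move is to check whether $\{x,y\}$ is a $\sigma$-separated pair, i.e. whether the segment $xy$ meets $zw$. Because $Q=xyzw$ is a convex quadrilateral with vertices in this cyclic order, the diagonals are $xz$ and $yw$; the side $xy$ and the side $zw$ are opposite sides and do \emph{not} cross. So $xy$ does not meet $zw$, and $x,y$ lie (weakly) on one side of the line $\overline{zw}$. This means I cannot directly invoke Lemma~\ref{view} for the pair $\{x,y\}$. Instead I should look at the two diagonals: the pair $\{x, \cdot\}$... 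Let me reconsider. The cleaner route: consider the point where diagonal $xz$ meets diagonal $yw$ — call it $p$ — and note $x$ and $w$... hmm. Actually the right pairing is to show $x$ together with the vertex opposite to it across $\sigma$ forms a separated pair. Since $z,w$ are the diameter endpoints, I want a $zw$-separated pair among $\{x,y\}$: but they're on the same side. So the correct application of Lemma~\ref{view} must instead use that $x$ sees $zw$ and $y$ sees $zw$ and that the angles of the quadrilateral force the sum of these viewing angles to be large, contradicting convexity or the triangle inequality.

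**The key inequality.** Here is the heart of it. Let $\alpha_z,\alpha_w$ be the angles of triangle $xzw$ at $z,w$ (so $x$ sees $zw$ under angle $\pi-\alpha_z-\alpha_w$), and let $\beta_z,\beta_w$ be the angles of triangle $yzw$ at $z,w$. Since $Q$ is convex with vertex order $x,y,z,w$, the full angle $Z$ at $z$ decomposes as $Z=\alpha_z+\beta_z$ (the diagonal... wait, at $z$ the two sides are $zy$ and $zw$, and $zx$ is a diagonal interior to the angle), so actually $Z = \angle(yz,zw) $, and $\angle(yz,zx)+\angle(xz,zw) = Z$, giving $\beta_z + \alpha_z = Z$ where I interpret $\beta_z=\angle yzx$... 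I will need to set this bookkeeping up carefully, but the upshot is $\alpha_z+\beta_z = Z$ and $\alpha_w+\beta_w=W$. Then $X+Y\le\pi$ combined with the quadrilateral angle sum $X+Y+Z+W=2\pi$ gives $Z+W\ge\pi$, i.e. $(\alpha_z+\alpha_w)+(\beta_z+\beta_w)\ge\pi$. Hence the sum of the two viewing angles, $[\pi-(\alpha_z+\alpha_w)]+[\pi-(\beta_z+\beta_w)] = 2\pi - (Z+W) \le \pi < 5\pi/6 + \pi/6$... I see the numbers need massaging. The genuine contradiction should come from: $\{x,y\}$ viewing $zw$ under an angle that is \emph{too small} forces $x$ or $y$ to be far from $z$ or $w$; more precisely, if the viewing angles are small then $x$ (or $y$) sees $zw$ under an angle $<\pi/2$, hence in triangle $xzw$ the side $zw$ is not the largest — wait, that's the wrong direction, small viewing angle at $x$ means $zw$ IS opposite a small angle so $zw$ could be short. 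Let me flip: I want $zw$ to be forced to be \emph{not} the longest side. If the angle at $x$ in triangle $xzw$ satisfies $\angle zxw \ge \max(\angle xzw,\angle xwz)$ then $zw\ge xz, xw$, consistent; but if $Z,W$ are large (which is what $X+Y\le\pi$ gives us), then in triangle $xzw$ the angle at $z$, namely $\alpha_z$, or at $w$, namely $\alpha_w$, might exceed $\angle zxw$, making $xw$ or $xz$ longer than $zw$ — contradicting $zw$ being the diameter.

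**Where the work is.** So the plan is: (1) reduce to $\sigma=zw$ being the diameter; (2) write $Z=\alpha_z+\gamma_z$, $W=\alpha_w+\gamma_w$ splitting by the diagonal $xz$ or $yw$ appropriately, being careful which diagonal splits which vertex angle; (3) from $X+Y\le\pi$ deduce $Z+W\ge\pi$; (4) show this forces, in one of the triangles $xzw$ or $yzw$, a base angle ($\alpha_z,\alpha_w,\beta_z$, or $\beta_w$) to be $\ge$ the apex angle, hence a lateral side $\ge zw$, with equality analysis pinning down when $x$ or $y$ is itself diametral. The main obstacle I anticipate is step (4): extracting a clean contradiction purely from $Z+W\ge\pi$ is not immediate because the angle at $z$ is split between the two triangles, so a large $Z$ does not by itself make $\alpha_z$ large. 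I expect one must use Lemma~\ref{view} in an essential way — perhaps applied to a genuinely $zw$-separated pair formed by $x$ (or $y$) together with an interior point of a diagonal, or applied after first passing to the triangle $xyz$ or $xyw$ and arguing that $z$ or $w$ would then lose its diametral status inside that triangle. Handling the boundary/equality cases (when $X+Y=\pi$ exactly, or when $Q$ degenerates toward a triangle) will require separate, careful attention to conclude $x$ or $y$ is diametral rather than merely deriving a strict contradiction.
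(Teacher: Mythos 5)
Your reduction to the case where $zw$ is the diameter is correct and matches the paper's starting point, but the proposal is not a proof: you explicitly leave step (4) unresolved, and the route you sketch toward it --- invoking Lemma \ref{view} and splitting both $Z$ and $W$ between the two triangles $xzw$ and $yzw$ --- is not the way the argument closes. The difficulty you yourself flag (``a large $Z$ does not by itself make $\alpha_z$ large'') is a genuine obstruction to your symmetric decomposition, and Lemma \ref{view} plays no role in this lemma at all.

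The missing idea is to use the two triangles cut off by the diagonals \emph{asymmetrically}, so that no angle ever needs to be split on the side where you draw the conclusion. In triangle $xzw$ (cut off by the diagonal $xz$), the angle at $w$ is the \emph{entire} quadrilateral angle $W$, while the angle at $x$ is only the part $\angle wxz < X$; since $x$ is not diametral, $\|x-z\| < \|z-w\|$, and comparing these two sides with their opposite angles in that triangle gives $W < \angle wxz < X$. Symmetrically, in triangle $yzw$ (cut off by the diagonal $yw$) the angle at $z$ is all of $Z$, and $\|y-w\| < \|z-w\|$ gives $Z < \angle wyz < Y$. Adding the four angles, $2\pi = X+Y+Z+W < 2(X+Y) \leq 2\pi$, a contradiction. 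Note also that the comparisons you list as $\|z-w\| \geq \|x-z\|, \|y-w\|$ must be strict --- this is exactly what ``$x$ and $y$ are not diametral'' buys you, and it is what makes the final inequality strict; no separate equality-case or degenerate-case analysis is needed.
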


\begin{proof}

Assume $x,y$ are not diametral points of $Q$. Then the side $zw$ is
 longer than the diagonals $xz$ and $yw$, whence
$W < \angle wxz<X$ and $Z < \angle wyz<Y$. It follows that

$$2 \pi = X+Y+Z+W < 2 (X+Y) \leq 2 \pi,$$
absurd.

\end{proof}

\begin{thm}

\label{23}

Let $K$ be a planar convex body.

$(i)$ Any point $x \in\bd K$ with $X \leq \pi/3$ is a diametral
point of $K$. If $K$ has two such points, they determine a diameter
of $K$.

$(ii)$ Among any two points $x,y \in\bd K$ with $X+Y \leq 5\pi/6$
there exists a diametral point of $K$.

$(iii)$ Among any three points $x,y,z \in\bd K$ with $X+Y + Z \leq
4\pi/3$ there exists a diametral point of $K$.

\end{thm}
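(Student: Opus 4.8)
The plan is to isolate one auxiliary fact and one sublemma, and then treat $(i)$, $(ii)$, $(iii)$ in turn. The auxiliary fact is that every diametral point of $K$ is an extreme point of $K$: if $|uv|$ equals the diameter $D$ and $u=\lambda a+(1-\lambda)b$ with $a,b\in K$, $0<\lambda<1$, then $D=|uv|\le\lambda|av|+(1-\lambda)|bv|\le D$ forces $|av|=|bv|=D$ and $a-v$, $b-v$ positively proportional, hence $a=b$. The sublemma, call it $(\ast)$, is: \emph{if $uv$ is a diameter of $K$ and $p,q\in\bd K$ are non-diametral points lying on one closed side of the line $\overline{uv}$ with $P+Q\le\pi$, then a contradiction ensues.} To see $(\ast)$, first note neither $p$ nor $q$ lies on $\overline{uv}$: a point of $\bd K$ on that line is either an endpoint of $uv$ (hence diametral) or has complete angle $\pi$, which together with $P+Q\le\pi$ is impossible. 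So $p,q$ lie strictly on one side while the extreme points $u,v$ lie on $\overline{uv}$, and the four points form a genuine convex quadrilateral $Q'$ in which $p,q$ are adjacent and $u,v$ are adjacent (no $p$ or $q$ can sit in the relative interior of an edge of $Q'$, that would again give complete angle $\pi$). The interior angles of $Q'$ at $p,q$ are at most $P,Q$, with sum $\le\pi$, so Lemma~\ref{4} applies and makes $p$ or $q$ an endpoint of a longest segment of $Q'$; since $\mathrm{diam}(Q')=|uv|=D$, that segment — one of $pq,pu,pv$ or $qp,qu,qv$ — has length $D$, so $p$ or $q$ is diametral in $K$, a contradiction.

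For $(i)$, assume $x$ is not diametral and let $uv$ be a diameter, so $|xu|,|xv|<D$. Since $xu,xv\subset K$ we have $\angle uxv\le X\le\pi/3$, hence $\cos\angle uxv\ge\frac{1}{2}$, and the law of cosines gives $D^2=|uv|^2\le|xu|^2+|xv|^2-|xu|\,|xv|\le\max\{|xu|,|xv|\}^2<D^2$, a contradiction. If $K$ has two such points $x,y$, both are diametral; pick $u\in\bd K$ with $|xu|=D$. If $y=u$ we are done; otherwise $y$ is not interior to $xu$ (else $Y=\pi$), so $xyu$ is a nondegenerate triangle in which $xu$ is a longest side, forcing $\angle xyu\ge\pi/3$; but $\angle xyu\le Y\le\pi/3$, so the triangle is equilateral and $|xy|=D$.

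For $(iii)$, assume none of $x,y,z$ is diametral and fix a diameter $uv$; two of the points, say $x,y$, lie on one closed side of $\overline{uv}$ (a point lying on $\overline{uv}$ is disposed of as in $(\ast)$). If $X+Y\le\pi$, then $(\ast)$ gives a contradiction. If $X+Y>\pi$, then $Z\le 4\pi/3-(X+Y)<\pi/3$, so $z$ is diametral by $(i)$, again a contradiction. Part $(ii)$ runs the same way with a fixed diameter $uv$: if $x,y$ lie on one side of $\overline{uv}$, then $(\ast)$ applies since $X+Y\le 5\pi/6<\pi$; if they lie on opposite sides, the chord $xy$ cannot meet $\overline{uv}$ outside $uv$ (a crossing point beyond $v$, say, would force the extreme point $v$ into the interior of the triangle $xyu$), so $\{x,y\}$ is a $uv$-separated pair and Lemma~\ref{view} gives $5\pi/6\le\angle uxv+\angle uyv\le X+Y\le 5\pi/6$; hence equality holds throughout, in particular $\angle uxv=X$, $\angle uyv=Y$, and $\{x,y\}$ realises the extremal value $5\pi/6$.

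The main obstacle is precisely this borderline configuration in $(ii)$: one must rule out a $uv$-separated pair seeing $uv$ under exactly $5\pi/6$ that uses its full tangent cones (the equalities $\angle uxv=X$, $\angle uyv=Y$ force the rays $xu,xv$ to bound $T_x$ and $yu,yv$ to bound $T_y$) and consists of two non-diametral points. Intuitively such rigidity should push one of $x,y$ onto $\bd B(u,D)$ or $\bd B(v,D)$, i.e. make it diametral; establishing this — either by analysing the equality case inside the proof of Lemma~\ref{view}, or by re-deriving it from the inclusions $K\subseteq T_x$, $K\subseteq T_y$ together with $|xu|,|xv|,|yu|,|yv|<D$ — is the technical heart of the result. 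Once it is available, parts $(ii)$ and $(iii)$, together with the sharpness of the constants, follow from $(\ast)$, pigeonhole, Lemma~\ref{4}, and part $(i)$ as above.
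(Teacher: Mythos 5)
Your parts $(i)$ and $(iii)$ are complete, and your $(iii)$ is actually cleaner than the paper's: by pigeonholing two of the three points, say $x,y$, onto one closed side of $\overline{uv}$ and splitting on whether $X+Y\le\pi$ (your sublemma $(\ast)$, which packages the paper's use of Lemma \ref{4}) or $X+Y>\pi$ (which forces $Z<\pi/3$ and hands the problem to part $(i)$), you avoid invoking $(ii)$ altogether, whereas the paper's case analysis for $(iii)$ needs $(ii)$ for the cross-side pairs.

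The genuine gap is the one you flag yourself: in $(ii)$, the borderline case of a $uv$-separated pair with $X+Y=5\pi/6$ is left unresolved, and without it $(ii)$ is not proved. The paper closes it in two steps. First, it applies Lemma \ref{view} not to $K$ but to the sub-body $xuyv$, whose diameter is still $uv$: if $K\neq xuyv$, some point of $K$ lies strictly beyond one of the four edges of that quadrilateral, each of which is incident to $x$ or to $y$, forcing $X>\angle uxv$ or $Y>\angle uyv$ and hence $\angle uxv+\angle uyv<X+Y=5\pi/6$, contradicting Lemma \ref{view} applied to $xuyv$; so $K=xuyv$. Second, a perturbation argument: push $x$ slightly outside $K$ along the line $\overline{xy}$ to get $K'=x'uyv$ with the angle sum at $x'$ and $y$ strictly below $5\pi/6$, so by Lemma \ref{view} $uv$ is no longer a diameter of $K'$; the new diameter of $K'$ must then emanate from $x'$, and letting $x'\to x$ shows by continuity that $x$ is diametral in $K$ after all. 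Your proposed routes (analysing the equality case inside the proof of Lemma \ref{view}, or a direct tangent-cone argument) might well be made to work, but no such argument is supplied, so as written the proof of $(ii)$ is incomplete at precisely the step you call the technical heart.
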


\begin{proof}

$(i)$ Assume the existence of a diameter $yz$ of $K$, with $y, z$
different from $x$. It follows that in the triangle $xyz$ the angle
at $x$ is not smaller than the other two, whence the triangle is
equilateral. Hence, $xy$ and $xz$ are diameters, too. Thus, (i) is
proven.


\medskip

For the rest of the proof (parts $(ii)$ and $(iii)$), assume the
conclusion does not hold, and let $uv$ be a diameter of $K$.

\medskip

$(ii)$ If $x$ and $y$ are not $uv$-separated, then we have the
quadrilateral $xyvu$. By Lemma \ref{4}, $X+Y \ge\angle uxy + \angle
xyv> \pi$, which contradicts our hypothesis.

So $x$ and $y$ are $uv$-separated; then, by Lemma \ref{view}, $X+Y
\geq 5\pi/6$. This and the hypothesis imply $X+Y = 5\pi/6$.

Assume that $K$ is not the quadrilateral $xuyv$. Then the
sum of the angles of $xuyv$ at $x$ and $y$ is less than $5\pi/6$, in
contradiction with Lemma \ref{view}, applied to $xuyv$.

So $K$ is the quadrilateral $xuyv$. Slightly moving $x$ out of $K$
along the line  $\overline{xy}$ would provide quadrilaterals
$K'=x'uyv$ with $x',y$ $uv$-separated and $X'+Y < 5\pi/6$. By Lemma
\ref{view}, $uv$ is no longer a diameter of $K'$, so  $x'$  is a
diametral point of $K'$. Now, let $x'$ converge back towards $x$.
Then  $K'\to K$, which implies that  $x$ is a diametral point of
$K$, contradicting our assumption.

\medskip

$(iii)$ Assume first that $x, y, z$ are all on one side of
$\overline{uv}$. Lemma \ref{4} gives

$$X + Y > \pi, \hspace{0.5cm} X+ Z > \pi, \hspace{0.5cm} Y+ Z > \pi,$$
so $X+Y+Z > 3 \pi/2$, contradicting $X+Y + Z \le 4\pi/3$.

Hence,  we can assume that $x, y$ are on one side of $\overline{uv}$
and $z$  on the other side. The previous case $(ii)$ and Lemma
\ref{4} imply

$$X + Y > \pi, \hspace{0.5cm} X+ Z > 5\pi/6, \hspace{0.5cm} Y+ Z > 5\pi/6.$$

Summing up, we get $X+Y+Z > 4 \pi/3$, which contradicts the
hypothesis.


\end{proof}



All bounds in Theorem \ref{23} are sharp, as one can see from the
following examples.

$(i)$ Consider an isosceles triangle $\Delta=xyz$ with equal sides
$\|x-y\|=\|x-z\|$ and $X = \pi/3 + \varepsilon$, with $\varepsilon$
arbitrarily small. Clearly, $x$ is not a diametral point of
$\Delta$.

$(ii)$ Consider a convex quadrilateral $Q'=x'uy'v$ with $\| x'-v \|
= \| x'-y' \| = \| y'-v \| = \| u-v \|$ and $\|u-x'\|=\|u-y'\|$.
Then, in $Q'$, $X'=Y'=U/2=5\pi/12$.

Let $x$ and $y$ be interior points of $Q'$, on the line
$\overline{x'y'}$, arbitrarily close to $x$ and $y$, respectively.
Then, in $Q=xuyv$, we have $X+Y=5\pi/6 + \varepsilon$, with
$\varepsilon$ arbitrarily small; moreover,  $uv$ is the unique
diameter of $Q$.

$(iii)$ Consider an equilateral triangle $\Delta=uvz'$ and let $m$
be the midpoint of $uv$. Take points $x,y$ on the circle of diameter
$uv$, separated from $z'$ by $\overline{uv}$, such that $xy\|uv$ and
$x$ is arbitrarily close to $u$. Then, in $xuz'y$, $X=Y=\pi/2 +
\varepsilon$, with $\varepsilon$ arbitrarily small.

Take a point $z$ on $z'm$, such that  $\|z-z'\|$ equals the distance
between the parallel lines $\overline{xy}$ and $\overline{uv}$. Of
course, in $xuzvy$, $Z = \angle uzv = \pi/3 + \varepsilon'$, with
$\varepsilon'>0$.

Then $X+Y + Z= 4\pi/3 + 2\varepsilon + \varepsilon'$, and
$2\varepsilon + \varepsilon'$ converges to $0$ as $x \to x'$.
Moreover,  $uv$ is the unique diameter of $xyvzu$.

\begin{co}

\label{cor1}

If the planar convex body $K$, symmetric about $\o$, has a boundary
point $x$ with $X \leq 5\pi/12$, then $x(-x)$ is a diameter of $K.$


\end{co}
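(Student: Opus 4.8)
The plan is to apply Theorem \ref{23}$(ii)$ to the pair $\{x, -x\}$. By the central symmetry of $K$ about $\mathbf{0}$, the boundary point $-x$ has the same complete angle as $x$, i.e. the angle of $\bd K$ at $-x$ towards $K$ equals $X$. Hence the sum of the two angles is $X + X = 2X \le 5\pi/6$, which is exactly the hypothesis of Theorem \ref{23}$(ii)$. That theorem then guarantees that at least one of $x$, $-x$ is a diametral point of $K$.

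The remaining step is to upgrade ``one of $x, -x$ is diametral'' to ``$x(-x)$ is itself a diameter''. Here I would use the symmetry again: if $x$ is a diametral point, there is a point $p \in K$ with $\|x - p\|$ equal to the diameter of $K$; by symmetry $-p \in K$ as well, and $\|(-x) - (-p)\| = \|x-p\|$ realises the diameter too. I then want to conclude that the antipodal segment $x(-x)$ has length at least that of $xp$. The clean way is to note that for a centrally symmetric body the diameter is attained by an antipodal pair: indeed $\|x - p\| \le \tfrac12\big(\|x-(-x)\| + \|p - (-p)\|\big) = \tfrac12\big(\|2x\| + \|2p\|\big) = \|x\| + \|p\|$, and since both $x(-x)$ and $p(-p)$ are chords of $K$ through $\mathbf{0}$, the longer of them has length $\ge \|x-p\|$; combined with the fact that $x$ (or $-x$) is diametral, $\|x - (-x)\| = 2\|x\|$ must equal the diameter, so $x(-x)$ is a diameter.

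The only mild subtlety — and the place to be careful — is the degenerate possibility that the diametral partner of $x$ is $-x$ itself, or that $x$ lies on a chord through $\mathbf 0$ shorter than some other central chord; the triangle-type inequality above handles both, since it shows no chord can beat the longest central chord by more than a controlled amount, and centrality forces equality. I expect this to be the main (though still routine) obstacle: making the passage from ``a diametral point lies in $\{x,-x\}$'' to ``$x(-x)$ is a diameter'' fully rigorous, rather than the application of Theorem \ref{23}$(ii)$, which is immediate.
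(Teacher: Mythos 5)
Your proof is correct. The first half is exactly the paper's argument: central symmetry gives the angle at $-x$ equal to $X$, so the sum is $2X \le 5\pi/6$, and Theorem \ref{23}$(ii)$ yields a diametral point in $\{x,-x\}$. Where you genuinely differ is the second half. The paper simply cites Theorem 4 of \cite{z-diam}, which asserts that in a centrally symmetric planar convex body the endpoints of every diameter are symmetric about $\o$, and concludes at once. You instead prove the needed fact from scratch: if $\|x-p\|$ equals the diameter $D$, then
$$D=\|x-p\|\le \|x\|+\|p\|=\tfrac12\bigl(\|2x\|+\|2p\|\bigr)\le \tfrac12(D+D)=D,$$
since the central chords $x(-x)$ and $p(-p)$ each have length at most $D$; equality throughout forces $2\|x\|=D$, so $x(-x)$ is a diameter. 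This is a clean, self-contained triangle-inequality argument that replaces the external citation and in fact establishes the relevant half of the quoted theorem of \cite{z-diam}; the paper's route is shorter but not self-contained. One small point worth stating explicitly (you only gesture at it): if Theorem \ref{23}$(ii)$ happens to deliver $-x$ rather than $x$ as the diametral point, central symmetry immediately makes $x$ diametral as well, so the argument applies either way.
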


\begin{proof}

The sum of the angles at $x$ and $-x$ is not larger than $5\pi/6$.
Now,
by Theorem \ref{23} (ii),  $x$ or $x'$ is a diametral point of $K$.
But, by Theorem 4 in \cite{z-diam}, the endpoints of each diameter
of $K$ are symmetric with respect to $\o$. So, $x(-x)$ is a diameter
of $K$.

\end{proof}

The above approach cannot be extended to $n \geq 4$ points.

\begin{rmk}

\label{no4}

There is no non-trivial constant $d(n)$ depending only on $n\ge 4$,
to guarantee that, for any planar convex body $K$, among any $n$
points $x_1, ... , x_n$ in ${\rm bd}K$ with $\sum_{i=1}^n X_i \leq
d(n)$ there exists a diametral point of $K$.

\end{rmk}

\begin{proof}
Suppose that such a constant $d(n)$ does exist.

Notice that any points $x_1, ... , x_n$ in the boundary of any
planar convex body $K$, form a convex $n$-gon; hence $\sum_{i=1}^n
X_i \geq (n-2)\pi$, and therefore $d(n) \geq (n-2)\pi$.

Next we show that, for any $\varepsilon >0$, there exist a planar
polygon $P$ and $n$ vertices of $P$ with $\sum_{i=1}^n X_i <
(n-2)\pi + \varepsilon$, none of which is a diametral point of $P$.
This implies $d(n) \leq (n-2)\pi$, hence necessarily $d(n) =
(n-2)\pi$. In this case, $K$ is precisely the convex $n$-gon with
vertices $x_1, ... , x_n$ and, trivially, at least two of them are
diametral points.


Let $uv$ be a diameter of a  circle ${C}$. Consider $n\geq 4$ points
$x_1, ... , x_n$ on ${C}$, at least two of them on each side of the
line $\overline{uv}$, such that no two of them are diametrally
opposite. Let $x_1, ... , x_k$ be on one side and $x_{k+1}, ... ,
x_n$ on the other side of $\overline{uv}$.

Of course, in the $n$-gon $x_1...x_n$,  we have $\sum_{i=1}^n \angle
x_i x_{i+1} x_{i+2} = (n-2)\pi$, where indices $i$ are taken modulo
$n$. Let $P$ denote the $(n+2)$-gon  $uvx_1...x_n$. By taking
$x_1,x_n$ close to $u$, and $x_k, x_{k+1}$ close to $v$, we get
$\sum_{i=1}^n X_i < (n-2)\pi + \varepsilon$, with $\varepsilon$
arbitrarily small. But $P$ has the unique diameter $uv$.
\end{proof}


\section {Convex bodies in $\mathbb{R} ^3$}

\label{bd3}

We obtain here
  results similar to Theorem 2.3, locating diametral points of \cbs\ in $\Rs$.

We denote by $\theta_x$ the complete angle at the point $x$ in
bd$K$, and by $\omega_x$ the curvature at $x$; hence, $\omega_x =
2\pi - \theta_x$.

\begin{thm}

\label{3n}

Let $K$ be a convex body in $\mathbb{R} ^3$.

$(i)$ Any point $x \in {\rm bd} K$ with $\theta_x \leq 2\pi/3$ is a
diametral point of $K$. If $K$ has two such points, they determine a
diameter of $K$.

$(ii)$ Among any two points $x,y \in {\rm bd} K$ with $\theta_x +
\theta_y \leq 3\pi/2$ there  exists a diametral point of $K$.

$(iii)$ Among any three points $x,y,z \in {\rm bd} K$ with $\theta_x
+ \theta_y + \theta_z \leq 9\pi/4$ there exists a diametral point of
$K$.

\end{thm}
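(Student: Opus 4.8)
The plan is to mimic the strategy of Theorem \ref{23}, replacing the planar angle $X$ at a boundary point by the complete angle $\theta_x$, and replacing the plane geometry arguments by arguments in suitable plane sections through the relevant points. For part $(i)$, suppose $x$ is not diametral and let $yz$ be a diameter of $K$. Working in the plane $\Pi$ through $x,y,z$, the cross-section $K\cap\Pi$ is a planar convex body containing the triangle $xyz$, and $yz$ is still the longest of the three sides (since $\|y-z\|$ is the diameter of $K$, hence of $K\cap\Pi$). So the angle of the triangle at $x$ is $\geq \pi/3$; but the angle of $K\cap\Pi$ at $x$ towards the section is at most $\theta_x$ suitably interpreted — here one needs the elementary fact that the complete angle $\theta_x$ dominates the angle at $x$ of any planar section through $x$ of (the tangent cone of) $K$. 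Concluding $\pi/3 \le$ section-angle $\le \theta_x$ gives a contradiction unless equality holds throughout, and then, as in the proof of Theorem \ref{23}$(i)$, $xy$ and $xz$ are diameters too; the two-point addendum follows the same way.

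For parts $(ii)$ and $(iii)$, assume the conclusion fails and fix a diameter $uv$ of $K$. The key geometric reductions are three-dimensional analogues of Lemmas \ref{4} and \ref{view}: first, if $x,y$ are \emph{not} $uv$-separated (in the sense that segment $uv$ is not met by $xy$, equivalently $x,y,u,v$ span a convex "quadrilateral" configuration with $xy$ and $uv$ non-crossing), then, passing to the plane section through $x,y,u,v$ when these are coplanar — or more carefully to the two plane sections $\overline{xuv}$ and $\overline{yuv}$ and combining — one gets $\theta_x+\theta_y > \pi$ by an argument parallel to Lemma \ref{4}. Second, if $x,y$ \emph{are} $uv$-separated, then Lemma \ref{view} should still apply after sectioning: the segment $uv$ is a diameter of the planar body $K\cap\overline{xyuv}$ (assuming coplanarity; otherwise one argues with the plane through $uv$ and a point of $xy\cap uv$), the pair $x,y$ is $uv$-separated there, so $x,y$ see $uv$ under an angle $\ge 5\pi/6$ in that plane, and each of these planar angles is bounded above by the corresponding complete angle; hence $\theta_x+\theta_y\ge 5\pi/6$ — and in fact one wants a cleaner bound. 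Actually, the numbers $3\pi/2 = 2\cdot(3\pi/4)$ and $9\pi/4$ suggest that the right reduction replaces the planar threshold $\pi$ by $3\pi/2$ and $5\pi/6$ by $5\pi/6+?$; more plausibly, one should prove directly the 3D analogues: \emph{$\theta_x+\theta_y>3\pi/2$ when $x,y$ are not $uv$-separated}, and \emph{$\theta_x+\theta_y\ge 5\pi/6$ is replaced by a bound summing, over the three pairs, to force $\theta_x+\theta_y+\theta_z>9\pi/4$ in the separated case} — I would first settle on the exact two-point inequalities needed and then combine them by the same casework as in Theorem \ref{23}$(iii)$ (all of $x,y,z$ on one side of the plane $\overline{uv}$ versus two on one side and one on the other).

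The casework for $(iii)$ then runs exactly as before. If $x,y,z$ all lie in one open halfspace bounded by a supporting-type plane through $\overline{uv}$, then applying the not-$uv$-separated inequality to each of the three pairs yields $2(\theta_x+\theta_y+\theta_z)>3\cdot(\text{threshold})$, which should beat $9\pi/4$. Otherwise $z$ is separated from $\{x,y\}$ by that plane; then the pair $x,y$ is not $uv$-separated while each of $x,z$ and $y,z$ is $uv$-separated (or at least satisfies the separated inequality from part $(ii)$'s analogue), and summing the three inequalities overshoots $9\pi/4$. The sharpness examples I would expect to construct by "thickening" the planar extremal configurations: e.g. for $(iii)$, take a doubly-covered (degenerate) or very flat convex body over the planar example in $xuzvy$, so that each complete angle $\theta$ is close to twice the corresponding planar angle $X$ and the sum is close to $9\pi/4$, while $uv$ stays the unique diameter.

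The main obstacle will be the coplanarity issue: in the plane, $x,y,u,v$ automatically lie in one plane, but in $\mathbb{R}^3$ a generic such quadruple does not, so "the quadrilateral $xyvu$" and the direct invocations of Lemmas \ref{4} and \ref{view} must be replaced by a careful choice of sectioning plane together with the inequality $\theta_x\ge(\text{angle at }x\text{ of any planar section of the tangent cone through }x)$. Making that inequality precise and checking it actually gives the stated thresholds $2\pi/3$, $3\pi/2$, $9\pi/4$ (rather than merely $2X$-type bounds from a single section, which would only give $2\pi/3$, but $2\cdot 5\pi/6 = 5\pi/3 \neq 3\pi/2$, so a genuinely sharper $3$-dimensional estimate is needed) is where the real work lies. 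I would expect the correct tool to be: unfold the tangent cone $T_x$ to a planar angle of measure $\theta_x$, note that a segment of $K$ through $x$ lifts to a geodesic in $T_x$, and use that a plane section of $T_x$ through the apex spans, after unfolding, an angle no larger than $\theta_x$ — combined, for the two-point estimates, with the fact that two non-crossing chords from $x,y$ force the two section-angles to cover more than half of each cone.
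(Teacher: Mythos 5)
Your part $(i)$ is essentially the paper's argument, but as written it contains a slip: bounding the section angle at $x$ by $\theta_x$ gives nothing, since $\pi/3\le\theta_x\le 2\pi/3$ is no contradiction. What is needed (and what the paper uses) is that the plane $\overline{xuv}$ splits the tangent cone at $x$ into two parts whose unfolded angles sum to $\theta_x$, each at least the planar section angle, so that the angle of $K\cap\overline{xuv}$ at $x$ is at most $\theta_x/2\le\pi/3$; this contradicts Theorem \ref{23}$(i)$. You do seem aware of this factor of $2$ later on, but the step should be stated correctly.

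The genuine gap is in part $(ii)$, and you diagnose it yourself without closing it: sectioning plus the bound ``planar angle $\le\theta/2$'' yields $\theta_x+\theta_y\ge 5\pi/3$ in the $uv$-separated (coplanar) case, and the non-separated case in $\mathbb{R}^3$ is generically non-coplanar, so Lemma \ref{4} has no direct analogue; your proposal ends by saying the real work lies in finding a sharper three-dimensional estimate, which is precisely the part that is missing. The paper's key idea, absent from your plan, is a \emph{double unfolding of the tetrahedron} $T=uvxy$: unfold $xuv\cup yuv$ along $uv$ (angles $X,Y,U,V$) and separately unfold $uxy\cup vxy$ along $xy$ (angles $X',Y',U',V'$). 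Since unfolding does not decrease the distance between the free vertices, the diagonal of $Q'$ corresponding to $uv$ has length at least $\|u-v\|>\|x-y\|$, so Theorem \ref{23}$(ii)$ applied to $Q'$ gives $X'+Y'>5\pi/6$, hence $U'+V'<7\pi/6$. The total face-angle sum of $T$ is $4\pi$, and $X+X'\le\theta_x$, $Y+Y'\le\theta_y$ (the cone at $x$ spanned by $u,v,y$ sits inside the tangent cone), so $\theta_x+\theta_y\le 3\pi/2$ forces $U+U'+V+V'\ge 5\pi/2$, whence $U+V>4\pi/3$ and $X+Y<2\pi/3$; then $\angle uxv<\pi/3$ or $\angle uyv<\pi/3$, contradicting that $uv$ is a longest side of $xuv$ or $yuv$. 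This single argument needs no separated/non-separated case split. Part $(iii)$ then follows at once by summing the three pairwise inequalities $\theta_\cdot+\theta_\cdot>3\pi/2$ from $(ii)$, which is simpler than the casework you sketch (and your casework would in any event rest on the unproved two-point estimates). Your sharpness remarks are also not needed: the theorem claims no optimality for $(ii)$ and $(iii)$ (the paper states these as a conjecture and an open question).
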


\begin{proof}


$(i)$ Assume there exists $K \subset \mathbb{R} ^3$ and a point
$x\in\bd K$ with $\theta_x \le 2\pi/3$, which is not diametral.

Let $uv$ be a diameter of $K$. In the planar \cb\
$K\cap\overline{xuv}$, the angle $X$ at $x$ must be at most
$\theta_x/2\le \pi/3.$ By Theorem 2.4 $(i)$, $X>\pi/3,$ and a
contradiction is obtained.

\bs
 $(ii)$ Assume there exists $K \subset \mathbb{R} ^3$ and points
$x,y$ on bd$K$ with $\theta_x + \theta_y \leq 3\pi/2$, none of which
is a diametral point of $K$.

Let $uv$ be a diameter of $K$. We consider the (possibly degenerate)
tetrahedron $T=uvxy$.

We unfold $xuv\cup yuv$ on a plane, with $x, y$ coming on different
sides of $\overline{uv}$. The resulting quadrilateral $Q$ has angles
$X$, $Y$, $U$, $V$ at the points corresponding to $x$, $y$, $u$,
$v$, respectively.
 Now, unfold $uxy\cup vxy$ on a plane, with $u, v$ coming on
different sides of $\overline{xy}$. The resulting quadrilateral $Q'$
has angles $X'$, $Y'$, $U'$, $V'$ at the points corresponding to
$x$, $y$, $u$, $v$. In $Q'$, the length of the diagonal
corresponding to $uv$ equals at least $\|u-v\| > \|x-y\|.$ By
Theorem 2.4 $(ii)$, $X'+Y'>5\pi/6,$ whence $U'+V'< 2\pi -
(5\pi/6)=7\pi/6.$

We have

$$X+X'+Y+Y'+U+U'+V+V'=4\pi$$
in $\bd T$.

Since $X+X'+Y+Y'\le \theta_x+\theta_y \le 3\pi/2,$ we have
$U+U'+V+V'\ge 5\pi/2.$ This, together with the inequality
$U'+V'<7\pi/6$ obtained above, yields $U+V> 4\pi/3.$ This implies
$X+Y< 2\pi/3$. Hence, $X<\pi/3$ or $Y<\pi/3$. Thus, $uv$ cannot be a
longest side,  in $xuv$ or in $yuv$, and a contradiction is
obtained.

\bs
 $(iii)$
Suppose
 $\theta_x
+ \theta_y + \theta_z \leq 9\pi/4$, but there is no diametral point
among $x, y, z$. Then, by $(ii)$, $\theta_x + \theta_y > 3\pi/2$,
$\theta_y + \theta_z >3\pi/2$, $\theta_z + \theta_x > 3\pi/2$. It
follows that $2\theta_x + 2\theta_y + 2\theta_z > 9\pi/2$, in
contradiction with our hypothesis.




%



%


%


\end{proof}





\begin{thm}

\label{cor32}

If the  convex body $K$, symmetric with respect to $\o$, has a
boundary point $x$ with $\theta_x \leq 5\pi/6$, then $x(-x)$ is a
diameter of $K$.


\end{thm}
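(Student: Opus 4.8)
The plan is to mimic the argument used for Corollary \ref{cor1} in the planar case, but feeding in Theorem \ref{3n}$(ii)$ instead of Theorem \ref{23}$(ii)$, and then invoking the three-dimensional analogue of the symmetry statement for diameters. First I would observe that the complete angle is a local invariant of $\bd K$ at a point, so if $K$ is symmetric about $\o$ then $\theta_{-x}=\theta_x$ for every $x\in\bd K$; hence the hypothesis $\theta_x\le 5\pi/6$ gives $\theta_x+\theta_{-x}\le 5\pi/3<3\pi/2$... wait, $5\pi/3>3\pi/2$, so this needs care — one actually gets $\theta_x+\theta_{-x}=2\theta_x\le 5\pi/3$, which is \emph{not} below the threshold $3\pi/2$ of Theorem \ref{3n}$(ii)$. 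So the naive imitation fails, and the main obstacle is precisely closing this gap: the bound $5\pi/6$ on a single point is too weak to apply part $(ii)$ directly to the pair $\{x,-x\}$.

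To get around this, I would use the symmetry more aggressively. Since $K=-K$, a diameter $uv$ of $K$ has, by the three-dimensional version of Theorem 4 in \cite{z-diam} (the endpoints of each diameter of a centrally symmetric convex body are antipodal), the form $v=-u$; so every diameter passes through $\o$ and $\|u-v\|=2\|u\|$. Then I would run the unfolding argument of Theorem \ref{3n}$(ii)$ applied to the pair $\{x,-x\}$ and the diameter $u(-u)$, but now exploiting that the tetrahedron $T=u(-u)x(-x)$ is itself centrally symmetric: this forces the two unfoldings $Q$ and $Q'$ to be symmetric quadrilaterals (parallelograms, in fact, in the unfolded picture), so that $X=Y$, $U=V$, $X'=Y'$, $U'=V'$. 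The relation $X+X'+Y+Y'+U+U'+V+V'=4\pi$ then reads $2(X+X')+2(U+U')=4\pi$, i.e. $(X+X')+(U+U')=2\pi$, while $\theta_x=\theta_{-x}$ gives $X+X'\le \theta_x\le 5\pi/6$. Combined with the diagonal-length inequality $\|u-(-u)\|>\|x-(-x)\|$ feeding Theorem \ref{23}$(ii)$ into $Q'$ (yielding $X'+Y'>5\pi/6$, i.e. $2X'>5\pi/6$), the sharper symmetric bookkeeping should push $X$ or the relevant planar angle below $\pi/3$, contradicting Theorem \ref{23}$(i)$ exactly as before.

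Concretely, the key steps in order: (1) record $\theta_{-x}=\theta_x$ and deduce, via central symmetry of $K$, that every diameter is of the form $u(-u)$; (2) if $x$ (equivalently $-x$) is not diametral, set up the two unfoldings of $\bd T$ with $T=u(-u)x(-x)$ and note the induced central symmetry collapses the eight unfolded angles into four equal pairs; (3) use $X'+Y'>5\pi/6$ from Theorem \ref{23}$(ii)$ applied to $Q'$, i.e. $X'>5\pi/12$; (4) combine with $(X+X')+(U+U')=2\pi$ and $X+X'\le 5\pi/6$ to bound $X$ from above; (5) conclude $X<\pi/3$ in the planar section $K\cap\overline{x(-x)u}$ (or the corresponding unfolded triangle), contradicting Theorem \ref{23}$(i)$, whence $x$ must be diametral; (6) finally, since $x$ is diametral and diameters of $K$ are antipodal, $x(-x)$ is a diameter.

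The step I expect to be the real obstacle is (4)–(5): making the angle accounting tight enough that the weak single-point bound $5\pi/6$ — which is $2\cdot(5\pi/12)$ and only \emph{equals} twice the planar threshold of Corollary \ref{cor1}, not something strictly smaller — still yields a strict inequality $X<\pi/3$. This is where the central symmetry of $T$ must be used in full force, rather than merely $\theta_{-x}=\theta_x$; if the symmetric unfolding does not in fact pin the angles down to equal pairs (for instance because $x$, $-x$, $u$, $-u$ need not be coplanar and the two unfoldings live in different planes), one would instead argue by a limiting/perturbation trick in the spirit of the last paragraph of the proof of Theorem \ref{23}$(ii)$: perturb $x$ slightly off $\bd K$ to strict inequality, apply part $(ii)$, and pass to the limit. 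I would keep that fallback in reserve in case the clean symmetry computation does not close the gap.
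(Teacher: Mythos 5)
Your overall strategy---reduce to the planar two-point criterion via the antipodality of diameters of a centrally symmetric body---is the right one, but, as you yourself concede in steps (4)--(5), the bookkeeping does not close. Concretely: with $X=Y$, $X'=Y'$, $U=V$, $U'=V'$ you get $X+X'+U+U'=2\pi$, $X+X'\le\theta_x\le 5\pi/6$ and $X'>5\pi/12$, hence only $X<5\pi/6-5\pi/12=5\pi/12$; but the contradiction you are aiming for (that $u(-u)$ cannot be the strictly longest side of the triangle $xu(-u)$) requires $X\le\pi/3$, and $5\pi/12>\pi/3$. So the tetrahedral unfolding, even after the symmetric collapse of the eight angles, leaves a genuine gap, and the perturbation fallback you hold in reserve does not address it.

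The missing observation is precisely the one you list as a possible failure mode: the four points $x$, $-x$, $u$, $-u$ \emph{are} always coplanar, since they lie in the linear span of $x$ and $u$, a $2$-plane through the origin. The ``tetrahedron'' $T$ is degenerate: it is the planar parallelogram $xu(-x)(-u)$. This degeneracy is exactly what rescues the argument, because it forces $X'=X$ (the two face angles $\angle ux(-x)$ and $\angle (-u)x(-x)$ add up to $\angle ux(-u)$), so your two inequalities become $2X\le\theta_x\le 5\pi/6$ and $2X>5\pi/6$, an immediate contradiction. This is in substance the paper's proof, stated without any unfolding: if $y(-y)$ is a diameter of $K$ (antipodal by Theorem 4 of \cite{z-diam}) and $x(-x)$ is not, then in the planar parallelogram $xy(-x)(-y)$ the diagonal $y(-y)$ is the unique diameter, so $x$ and $-x$ are not diametral points of that parallelogram; Theorem \ref{23}$(ii)$ then gives $\angle yx(-y)+\angle y(-x)(-y)>5\pi/6$, while the extrinsic angle between two segments issuing from a boundary point into $K$ is at most half the complete angle there, so the same sum is at most $\theta_x/2+\theta_{-x}/2\le 5\pi/6$. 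You have all the ingredients; what is missing is noticing the coplanarity that turns your step (2) into a one-line planar argument and makes steps (3)--(5) unnecessary.
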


\begin{proof}
Obviously, $\theta_x=\theta_{-x}$. Assume that $x(-x)$ is not a
diameter. Then consider a diameter, which, by Theorem 4 in [11],
must join diametrally opposite points. Let $y(-y)$ be that diameter.
In the parallelogram $xy(-x)(-y)$, the diagonal $y(-y)$ is the
unique diameter of it, so $x$ and $-x$ are not diametral points. By
Theorem 2.3 $(ii)$, $\angle yx(-y) +\angle y(-x)(-y)> 5\pi/6$.

But $\angle yx(-y) +\angle y(-x)(-y)\le (\theta_x/2)
+(\theta_{-x}/2)\le 5\pi/6$, and we got a contradiction.

\end{proof}







\section{Convex surfaces}

\label{sf3}

In this section we investigate intrinsic diameters on convex
surfaces. We obtain results similar to those in Sections 2 and 3.
Roughly speaking, as soon as the curvature concentrated at some
points is large enough, they become eligible as diametral points.


\begin{lm}[The Pizzetti-Alexandrov comparison theorem (\cite{al}, p. 132)]

\label{comp}

The angles of any geodesic triangle in a convex surface are not
smaller than the corresponding angles of the Euclidean triangle with
the same side-lengths.

\end{lm}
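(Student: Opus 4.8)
This is a classical theorem of Alexandrov (the smooth case going back to Pizzetti), so the plan is to recall the structure of the standard proof rather than to devise a new one. It proceeds in two stages: first one proves the inequality for convex \emph{polyhedral} surfaces, where the curvature is concentrated at finitely many conical points, each carrying curvature $\omega_p\ge 0$, i.e.\ complete angle $\theta_p\le 2\pi$; then one passes to an arbitrary convex surface $S$ by approximation.

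For the polyhedral case, let $T=ABC$ be a geodesic triangle. If no vertex of the polyhedron lies in the region bounded by $T$, then $T$ develops isometrically onto a Euclidean triangle and its three angles are already equal to those of the comparison triangle, so there is nothing to prove. Otherwise, choose a conical point $p$ in the interior of $T$, join it to $A$, $B$, $C$ by segments, and develop the three sub-triangles $ABp$, $BCp$, $CAp$ into the plane. Since $\theta_p\le 2\pi$, the three developed angles at the images of $p$ have sum at most $2\pi$, so the three flat triangles can be assembled around a common vertex leaving a wedge of angle $2\pi-\theta_p\ge 0$. Closing this wedge — that is, comparing the configuration with the one having the same side data but $\theta_p$ replaced by $2\pi$ — is governed by Alexandrov's lemma on consecutive triangles: straightening the figure at $p$ cannot increase the angles at $A$, $B$, $C$, which is precisely the inequality for this sub-configuration. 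One then removes the restriction to a single enclosed conical point by induction on the number of conical points inside $T$: cut $T$ along segments to a conical-point-free (no longer geodesic) triangle, apply the one-point step, and iterate.

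For a general convex surface $S$, approximate $S$ in the Hausdorff metric by convex polyhedral surfaces $S_n$. It is classical that the intrinsic metrics $\rho_n$ then converge to $\rho$, so the side-lengths of a fixed geodesic triangle on $S$ are limits of side-lengths of geodesic triangles on the $S_n$, and the associated Euclidean comparison triangles converge. Each $S_n$ satisfies the angle comparison — i.e.\ is an Alexandrov surface of curvature $\ge 0$ — and this property is preserved under convergence of the metrics, which gives the statement for $S$.

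The delicate point is exactly this last step: angles of geodesic triangles are genuinely discontinuous under Hausdorff convergence of convex surfaces, so one cannot pass the inequalities to the limit angle by angle. Instead one works with the intrinsic notion of angle (as a limit of Euclidean angles of short comparison triangles) and its lower semicontinuity under metric convergence — exactly what makes the class of non-negatively curved Alexandrov surfaces closed; the details are carried out in \cite{al}. In the smooth case this difficulty disappears: a $C^{2}$ convex surface has Gauss curvature $\ge 0$ because its second fundamental form is semidefinite, and the assertion reduces to the classical Rauch--Toponogov angle comparison for non-negatively curved surfaces, which is Pizzetti's theorem.
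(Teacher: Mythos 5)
The paper does not actually prove this lemma: it is imported verbatim from Alexandrov's book (\cite{al}, p.~132) as a known comparison theorem, so there is no internal argument to measure your attempt against. Taken on its own, your write-up is a faithful outline of the classical two-stage proof (polyhedral case by cutting at cone points, then passage to the limit), and you correctly identify the one genuinely delicate issue, namely that angles of geodesic triangles are not continuous under Hausdorff convergence, so one must either use lower semicontinuity of upper angles or first establish the distance form of the comparison (essentially Lemma~\ref{view-s} of this paper), which involves only lengths and survives the limit. But it remains a sketch rather than a proof: both hard steps are deferred. In the one-cone-point step, the phrase ``the same side data but $\theta_p$ replaced by $2\pi$'' is not coherent as stated, because once the three sub-triangles $ABp$, $BCp$, $CAp$ are developed, their angles at the image of $p$ are determined by their side lengths and sum to $\theta_p$; one cannot change that total while keeping the side data. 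What the classical argument actually does is keep the three outer sides fixed, close the gap of the development (bringing the two copies of $A$ together), and invoke the arm lemma to conclude that the angles at $B$ and $C$ do not increase, with a separate short argument for the angle at $A$, which is split across the slit. The induction on the number of cone points and the semicontinuity argument are likewise only named, not carried out. Since the paper itself treats the statement as a citation, this level of detail is arguably appropriate in context, but as a standalone proof the proposal has gaps exactly where the classical proof is difficult.
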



Lemma 4.2 below follows from Alexandrov's {\it
Konvexit\"{a}tsbedingung} (\cite{al}, p. 130).

\begin{lm}[]

\label{view-s}

Consider a convex surface $S$. Let $abc\subset S$ and
$a'b'c'\subset\Rp$ be two triangles as in Lemma $4.1$. If $d\in bc$,
$d'\in b'c'$ and $\rho(b, d)=\|b'-d'\|$, then $\rho(a, d)\ge
\|a'-d'\|.$

\end{lm}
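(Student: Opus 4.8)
The plan is to reduce the lemma to the planar law of cosines, letting Alexandrov's convexity condition supply the single genuinely metric inequality. First I would record the elementary bookkeeping. Since $d\in bc$ lies on a segment, lengths add: $\rho(b,d)+\rho(d,c)=\rho(b,c)=\|b'-c'\|$. Combined with the hypothesis $\rho(b,d)=\|b'-d'\|$ and $d'\in b'c'$, this forces $\rho(d,c)=\|d'-c'\|$ as well, so $d$ and $d'$ split the corresponding segments identically. In particular the ray $b'd'$ is the ray $b'c'$, so the angle of the planar triangle $a'b'd'$ at $b'$ equals the full angle $\beta':=\angle a'b'c'$.

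Next I would isolate the key object, the comparison angle at $b$. Write $\widetilde\angle_b(a,c)$ for the angle at the vertex corresponding to $b$ in the Euclidean triangle with side-lengths $\rho(a,b),\rho(b,c),\rho(a,c)$; by the definition of $a'b'c'$ in Lemma \ref{comp} this Euclidean triangle is exactly $a'b'c'$, so $\widetilde\angle_b(a,c)=\beta'$. Similarly let $\widetilde\angle_b(a,d)$ be the angle at $b$ in the comparison triangle of the geodesic triangle $abd$, whose sides are $\rho(a,b),\rho(b,d),\rho(a,d)$. I claim the whole lemma reduces to the single inequality
$$\widetilde\angle_b(a,d)\ \ge\ \widetilde\angle_b(a,c)=\beta'.$$
Granting it, the conclusion is pure trigonometry: the comparison triangle of $abd$ and the planar triangle $a'b'd'$ share the two sides $\rho(a,b)$ and $\rho(b,d)$ meeting at $b$, so by the law of cosines the side opposite $b$ is an increasing function of the included angle; since $\cos$ decreases on $[0,\pi]$, the angle inequality yields $\rho(a,d)\ge\|a'-d'\|$.

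The inequality $\widetilde\angle_b(a,d)\ge\beta'$ is exactly where Alexandrov's \emph{Konvexit\"{a}tsbedingung} (\cite{al}, p.~130) enters, and it is the only place nonnegative curvature is used rather than Euclidean trigonometry. The mechanism is the monotonicity of comparison angles: as a point $d_t$ runs along $bc$ away from $b$, parametrised by $t=\rho(b,d_t)$, the true hinge angle at $b$ stays fixed and equal to $\angle abc$, because $bd_t\subset bc$; the convexity condition then asserts that the associated comparison angle $\widetilde\angle_b(a,d_t)$ is non-increasing in $t$. Since $d$ corresponds to $t=\rho(b,d)\le\rho(b,c)$, evaluating this monotonicity at $d$ and at $c$ gives $\widetilde\angle_b(a,d)\ge\widetilde\angle_b(a,c)=\beta'$, which closes the argument.

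I expect the main obstacle to be precisely the extraction of this monotonicity from the form of the convexity condition printed in \cite{al}: one must verify that Alexandrov's condition there really compares the two comparison angles $\widetilde\angle_b(a,d)$ and $\widetilde\angle_b(a,c)$ (equivalently, gives the point-on-side distance comparison directly), and is not merely the weaker angle comparison of Lemma \ref{comp}. The latter only bounds the true angle below by each comparison angle separately, and \emph{does not} order the two comparison angles against each other, so it cannot deliver the lemma by itself; this is the subtlety the argument must navigate. Once the monotonicity is secured, no further curvature input is needed and the remaining law-of-cosines step is routine.
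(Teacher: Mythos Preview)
Your argument is correct and is precisely the standard unpacking of Alexandrov's \emph{Konvexit\"atsbedingung}: monotonicity of the comparison angle $\widetilde\angle_b(a,d_t)$ along $bc$, followed by the law of cosines. The paper's own proof is nothing more than the one-line citation to \cite{al}, p.~130, so your proposal agrees with it in substance while spelling out the mechanism (and you are right to flag that Lemma~\ref{comp} alone would not suffice).
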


The following statement is well-known. For a thorough introduction
to the theory of critical points for distance functions, see
\cite{gro}.

\begin{lm}


Each endpoint of a diameter on a convex surface is critical with
respect to the other.
Consequently, each digon determined by two  diameters, with no third
diameter passing through its interior, has both angles at most
$\pi$.

\end{lm}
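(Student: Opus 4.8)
The plan is to establish the two assertions in order, the second being a consequence of the first.

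For the first assertion I would argue by contradiction using the first variation of length. Recall that $p$ is \emph{critical} with respect to $q$ if for every direction $\xi$ in the space $\Sigma_p$ of directions at $p$ there is a segment from $p$ to $q$ leaving $p$ in a direction making angle at most $\pi/2$ with $\xi$. Let $pq$ be a diameter of $S$ and suppose $p$ is not critical with respect to $q$. Then some $\xi\in\Sigma_p$ makes angle $>\pi/2$ with the initial direction of every segment from $p$ to $q$; as the set of such segments is compact (in the topology of uniform convergence), these angles are all at least $\pi/2+\delta$ for some $\delta>0$. Moving $p$ a small distance $t$ along a segment issuing in the direction $\xi$ to a point $p_t$, and comparing each geodesic triangle $p\,p_t\,q$ with its Euclidean comparison triangle (Lemmas \ref{comp} and \ref{view-s}, which exhibit $S$ as a space of curvature bounded below by $0$), the usual monotonicity of comparison angles gives $\rho(p_t,q)\ge\rho(p,q)+t\sin\delta+o(t)$. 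Hence $\rho(p_t,q)>\rho(p,q)$ for small $t>0$, contradicting the maximality of $\rho(p,q)$ over all pairs of points of $S$; see \cite{gro}. By symmetry, $q$ is also critical with respect to $p$.

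For the second assertion, let $D$ be the digon bounded by diameters $\sigma_1,\sigma_2$ joining $p$ to $q$, with no third diameter meeting $\inn D$. Since $\rho(p,q)$ is the intrinsic diameter of $S$, every segment from $p$ to $q$ is itself a diameter. Let $\alpha$ be the angle of $D$ at $p$, that is, the measure of the angular sector $\Lambda\subset\Sigma_p$ between the initial directions of $\sigma_1$ and $\sigma_2$ on the side of $D$. Assume $\alpha>\pi$ and let $\xi$ bisect $\Lambda$; then $\xi$ makes angle $\alpha/2>\pi/2$ with the directions of both $\sigma_1$ and $\sigma_2$, and (since $\Lambda$ has measure $\alpha>\pi$ and $\xi$ is its midpoint) every direction within angular distance $\pi/2$ of $\xi$ lies strictly inside $\Lambda$. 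By criticality of $p$ with respect to $q$, some segment $\sigma_3$ from $p$ to $q$ leaves $p$ in a direction at angular distance at most $\pi/2$ from $\xi$, hence strictly inside $\Lambda$; in particular $\sigma_3\neq\sigma_1,\sigma_2$. Because $S$ is cone-like near $p$ (unfold $T_p$ as explained in the introduction), the part of a small neighbourhood of $p$ cut out by $\Lambda$ lies in $\inn D$, so an initial arc of $\sigma_3$ lies in $\inn D$. Thus $\sigma_3$ is a third diameter meeting $\inn D$ --- a contradiction. Therefore $\alpha\le\pi$, and the same argument at $q$ (using that $q$ is critical with respect to $p$) shows the other angle of $D$ is at most $\pi$.

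The hard part will be the rigorous first variation estimate in the merely Alexandrov (non-smooth) setting of a general convex surface; convexity furnishes nonnegative curvature through Lemmas \ref{comp}--\ref{view-s}, but the passage to the angle monotonicity and the strict increase of $\rho(\cdot,q)$ must be done with care, or else imported wholesale from the critical point theory for distance functions in \cite{gro}. A lesser, essentially routine, point is the local claim that a geodesic issuing from $p$ into the open sector $\Lambda$ stays in $\inn D$ for a while, which follows from the structure of the tangent cone $T_p$.
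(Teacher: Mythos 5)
The paper offers no argument for this lemma at all---it is recorded as well known, with a pointer to Grove's survey \cite{gro}---so there is nothing in-paper to compare with line by line; your proposal is an attempt to actually prove it, along the standard route. Your treatment of the second assertion is correct and complete: if the angle $\alpha$ of the digon $D$ at $p$ exceeded $\pi$, the bisector $\xi$ of the corresponding sector of $\Sigma_p$ would lie at distance greater than $\pi/2$ from the directions of $\sigma_1$ and $\sigma_2$, criticality would force a segment from $p$ to $q$ to start strictly inside that sector and hence enter $\inn D$, and any such segment is automatically a diameter since it has length $\rho(p,q)$. That is exactly the intended deduction of the second claim from the first.

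The step that does not work as written is the first-variation estimate. For curvature bounded \emph{below} by $0$ (which is what Lemmas \ref{comp} and \ref{view-s} encode), the angle/hinge comparison applied at the vertex $p$ goes the wrong way: since the actual angle $\alpha$ at $p$ dominates the comparison angle, the law of cosines yields $\rho(p_t,q)^2\le \rho(p,q)^2+t^2-2t\,\rho(p,q)\cos\alpha$, i.e.\ an \emph{upper} bound on $\rho(p_t,q)$, and no lower bound whatsoever; so "comparing the triangle $p\,p_t\,q$ with its Euclidean model" does not give $\rho(p_t,q)\ge\rho(p,q)+t\sin\delta+o(t)$. The standard argument (Grove, Cheeger) runs instead by contradiction: if $\rho(p_{t_i},q)\le\rho(p,q)$ along some $t_i\to 0$, the comparison angle at $p_{t_i}$ between $p_{t_i}p$ and a segment $\sigma_{t_i}$ to $q$ tends to at least $\pi/2$; the supplementary-angle relation along $\gamma$ then bounds the angle with the forward direction by $\pi/2+o(1)$, and semicontinuity of angles as $\sigma_{t_i}$ converges to a segment from $p$ to $q$ produces a segment making angle at most $\pi/2$ with $\xi$, contradicting non-criticality. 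You do flag this as the hard part and offer to import it wholesale from \cite{gro}, which is legitimate---indeed it is all the paper itself does---but the specific comparison you invoke, taken literally, proves nothing, so the import is not optional.
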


\begin{thm}

\label{3d}

Let $S$ be a convex surface.

$(i)$ Any point $x \in S$ with $\theta_x \leq 2\pi/3$ is a diametral
point of $S$. If $S$ has two such points, they determine a diameter
of $S$.

$(ii)$ Among any two points $x,y \in S$ with $\theta_x + \theta_y
\leq 5\pi/3$ there exists a  diametral point of $S$.

$(iii)$ Among any three points $x,y,z \in S$ with $\theta_x +
\theta_y + \theta_z \leq 5\pi/2$ there exists a  diametral point of
$S$.

\end{thm}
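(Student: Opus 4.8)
The plan is to mirror the three-dimensional argument of Theorem \ref{3n}, replacing planar sections of $K$ by unfoldings of unions of segments on $S$ and the classical triangle inequalities by the Pizzetti--Alexandrov comparison theorem (Lemma \ref{comp}) and the Konvexit\"atsbedingung (Lemma \ref{view-s}). For part $(i)$, suppose $x\in S$ has $\theta_x\le 2\pi/3$ but is not diametral, and let $yz$ be a diameter. Consider the geodesic triangle $xyz$; by Lemma \ref{comp} its angle at $x$ is at least the corresponding angle of the Euclidean comparison triangle. Since $x$ is not diametral, $\rho(y,z)$ exceeds both $\rho(x,y)$ and $\rho(x,z)$, so in the comparison triangle the angle opposite the longest side is the largest, hence at least $\pi/3$; combined with $\theta_x\ge$ (angle of $xyz$ at $x$) $\ge\pi/3$ and the fact that the two ``halves'' of the tangent cone at $x$ seen from a geodesic through the triangle sum to at most $\theta_x$, we force the comparison triangle to be equilateral and $\theta_x=2\pi/3$ with equality throughout; then $xy$ and $xz$ are diameters too, proving both assertions. (The precise bookkeeping of which angle at $x$ is bounded by $\theta_x$ versus $\theta_x/2$ is the one place needing care; unlike the ambient case, a single geodesic triangle at $x$ only uses ``one side'' of the cone, so the bound is $\theta_x\ge\pi/3$, which still suffices.)

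For part $(ii)$, assume $\theta_x+\theta_y\le 5\pi/3$ with neither $x$ nor $y$ diametral, and let $uv$ be a diameter. The key move is to work on the union of four geodesic triangles covering a ``tetrahedral'' region of $S$ spanned by $u,v,x,y$: unfold $xuv\cup yuv$ into the plane to get a quadrilateral $Q$ with vertex angles $X,Y,U,V$, and unfold $uxy\cup vxy$ to get $Q'$ with angles $X',Y',U',V'$. By Lemma \ref{view-s} (or Lemma \ref{comp}) applied to the diagonal of $Q'$ corresponding to $uv$, the unfolded length of that diagonal is at least $\rho(u,v)>\rho(x,y)$, so $uv$ is longer than the side $xy$ in $Q'$; Theorem \ref{23}$(ii)$ applied to $Q'$ then gives $X'+Y'>5\pi/6$, hence $U'+V'<7\pi/6$. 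Now the angle sum identity on this region reads $X+X'+Y+Y'+U+U'+V+V'\ge 4\pi$ — and here is the essential difference from the Euclidean case: the four unfolded triangles around $x$ have total angle at most $\theta_x$ (not exactly $\theta_x$), and likewise at $y$, so we only get $X+X'+Y+Y'\le\theta_x+\theta_y\le 5\pi/3$, whence $U+U'+V+V'\ge 4\pi-5\pi/3=7\pi/3$; combined with $U'+V'<7\pi/6$ this yields $U+V>7\pi/6$, and since the full angle sum of $Q$ is exactly $2\pi$ (it is a planar quadrilateral) we get $X+Y<2\pi-7\pi/6=5\pi/6$, so $X<\pi/3$ or $Y<\pi/3$; but then $uv$ cannot be the longest side of the triangle $xuv$ or $yuv$, contradicting $\rho(u,v)\ge\rho(u,x),\rho(v,x)$ via Lemma \ref{comp}. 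I expect the main obstacle to be justifying the angle inequality $X+X'+\cdots\le\theta_x+\theta_y$ and the existence of non-overlapping segments making the four triangles fit together as a genuine (possibly degenerate) region; one must argue that the segments $ux,vx,uy,vy,uv,xy$ can be chosen so the developments do not overlap, or else pass to a limit/perturbation as in the planar proof.

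For part $(iii)$, the argument is purely combinatorial given $(ii)$: if $\theta_x+\theta_y+\theta_z\le 5\pi/2$ but none of $x,y,z$ is diametral, apply $(ii)$ to each pair to get $\theta_x+\theta_y>5\pi/3$, $\theta_y+\theta_z>5\pi/3$, $\theta_z+\theta_x>5\pi/3$; summing gives $2(\theta_x+\theta_y+\theta_z)>5\pi$, i.e. $\theta_x+\theta_y+\theta_z>5\pi/2$, a contradiction. Thus the only real work is in $(i)$ and $(ii)$, and within those the delicate point is handling the inequality (rather than equality) between unfolded angle sums and complete angles, together with the degenerate configurations, exactly as the boundary cases were handled by perturbation in Theorem \ref{23}.
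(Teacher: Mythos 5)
Your part (iii) is correct given (ii) (and is in fact cleaner than the paper's case analysis, which mirrors the simpler summation used in Theorem \ref{3n}(iii)). Part (i) contains the right ingredients, but your parenthetical ``the bound is $\theta_x\ge\pi/3$, which still suffices'' is wrong as stated: a single geodesic triangle only gives $\theta_x>\pi/3$, which does not contradict $\theta_x\le 2\pi/3$. The correct bookkeeping, and the paper's, is that the segments $xy$, $xz$, $yz$ bound \emph{two} geodesic triangles whose angles at $x$ sum to at most $\theta_x\le 2\pi/3$, so one of them is at most $\pi/3$; Lemma \ref{comp} and Theorem \ref{23}(i) applied to that triangle's Euclidean comparison triangle then force $x$ to be diametral.

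The genuine gap is in part (ii), and it is twofold. First, the arithmetic of your tetrahedron-unfolding argument does not close for the constant $5\pi/3$: from $X+X'+Y+Y'\le 5\pi/3$ you get $U+U'+V+V'\ge 7\pi/3$, hence $U+V>7\pi/6$ and $X+Y<5\pi/6$, which yields only $\min(X,Y)<5\pi/12$, not $<\pi/3$; so you cannot conclude that $uv$ fails to be the longest side of $xuv$ or $yuv$, and no contradiction is reached. Your method would at best reprove the weaker constant $3\pi/2$ of Theorem \ref{3n}(ii) in the intrinsic setting (the paper's closing Conjecture indicates that reaching $5\pi/3$ by that route is open even extrinsically). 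Second, the structural input you yourself flag as ``the main obstacle'' is essential and unproved: on a convex surface the four geodesic triangles $xuv$, $yuv$, $uxy$, $vxy$ need not assemble into the boundary of anything, the three triangle angles at $x$ need not occupy disjoint sectors of the tangent cone (so their sum need not be bounded by $\theta_x$), and the six intrinsic distances need not be realizable as the edge lengths of a Euclidean tetrahedron, so neither $=4\pi$ nor $\ge 4\pi$ is available for the total angle sum. The paper proceeds entirely differently: Lemma 4.3 bounds the total curvature inside any digon bounded by two segments from $u$ to $v$ by $2\pi$, so $\omega_x+\omega_y=4\pi-(\theta_x+\theta_y)\ge 7\pi/3>2\pi$ forces $x$ and $y$ into different digons, hence some segment $xy$ meets $uv$ at a point $w$; one then builds a single planar comparison quadrilateral $x'u'y'v'$ with $\|x'-y'\|\le\rho(x,y)\le\rho(u,v)=\|u'-v'\|$ via Lemma \ref{view-s}, applies Theorem \ref{23}(ii) to get $\angle u'x'v'+\angle u'y'v'>5\pi/6$, and contradicts $\angle uxv+\angle uyv\le\theta_x/2+\theta_y/2\le 5\pi/6$ using Lemma \ref{comp}. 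That digon/curvature step is precisely what buys the stronger constant $5\pi/3$, and it is absent from your proposal.
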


\begin{proof}

$(i)$ Assume a point $x$ on $S$ verifies $\theta_x\le 2\pi/3$ and is
not a diametral point of $S$. Let $yz$ be a diameter of $S$. There
are two geodesic triangles with vertices at $x, y, z$ on $S$, at
least one of which has an angle not larger than $\pi/3$ at $x$. A
contradiction now follows from
 the assumption that $x$ is not a diametral point,
Lemma \ref{comp} and Theorem \ref{23} $(i)$.

Assume now that there are $x,y \in S$ with $\theta_x, \theta_y \leq
2\pi/3$, and take $z \in S \setminus \{x,y\}$. Join $x$, $y$ and $z$
by  segments to form two triangles on $S$. At least one of them has
its angle at $x$ not larger than $\pi/3$, so $yz$ is not a diameter
of $S$ or $xy$ is a diameter, by the preceding argument.
Analogously, $xz$ is not a diameter of $S$ or $xy$ is a diameter.

Since this holds for any $z\in S$ and $x,y$ are diametral points of
$S$, $xy$ must be a diameter of $S$.

\medskip

For the rest of the proof, assume the conclusions are false and let
$uv$ be a diameter of $S$.

The segments joining $u$ and $v$ determine on $S$ one or several
digons.

\medskip

$(ii)$ The points $x,y$ are not inside one digon, say $D$,
determined by segments from $u$ to $v$. Indeed, by Lemma 4.3, the
total curvature of the interior of $D$ is at most $2 \pi$,
hence
$$ 2\pi \geq \omega_x + \omega_y = 4\pi - (\theta_x + \theta_y ) \geq \frac73 \pi> 2\pi,$$
absurd.

Therefore, the points $x,y$ are in distinct digons, and so $x$ and
$y$ are $uv$-separated, for some diameter $uv$. Let $\{w\}=uv\cap
xy.$ Consider the points $x',y',u',v',w'$ in $\Rp$ such that $w'\in
u'v'$, $x'y'\cap u'v'\neq\emptyset$,
$\|u'-v'\|=\rho(u, v),$ $\|u'-x'\|=\rho(u, x),$ $\|u'-y'\|=\rho(u,
y),$ $\|v'-x'\|=\rho(v,x),$ $\|v'-y'\|=\rho(v,y),$
$\|u'-w'\|=\rho(u,w).$ By Lemma 4.2, $\|x'-w'\|\le \rho(x, w)$ and
$\|y'-w'\|\le \rho(y, w)$.

By Lemma 4.1, $\angle uxv \ge \angle u'x'v'$ and $\angle uyv \ge
\angle u'y'v'$.

But
$$\|x'-y'\|\le \|x'-w'\|+\|w'-y'\|\le
\rho(x,w)+\rho(w,y)=\rho(x,y)\le\rho(u,v)=\|u'-v'\|.$$
By Theorem 2.3 $(ii)$,
$$\angle uxv + \angle
uyv \ge \angle u'x'v' + \angle u'y'v' > 5\pi/6.$$
 But
 $$\angle uxv +
\angle uyv \le(\theta_x/2)+(\theta_y/2)\le 5\pi/6,$$
and a contradiction is obtained.








%



\medskip

$(iii)$ Notice that the points $x,y,z$ cannot be all in the same
digon determined by segments from $u$ to $v$. Indeed, for three
points $x,y,z$ in the same digon, we have, by Lemma 4.3, $\omega_{x}
+ \omega_{y} + \omega_{z} \leq 2\pi$, hence $\theta_{x} + \theta_{y}
+ \theta_{z} \geq 4\pi$,  contradicting the hypothesis.

Assume first that $x,y$ are in one digon, and $z$  in another one.
Then, by $(ii)$, $\theta_x + \theta_z > 5\pi/3$ and $\theta_y +
\theta_z > 5\pi/3$.
 At $(ii)$ we saw that
$\theta_x + \theta_y \geq 2\pi$.
Summing up these inequalities,  we get $\theta_x + \theta_y +
\theta_z > 8\pi/3 > 5\pi/2$, impossible.

Hence, $x,y,z$ are  in  different digons.
By $(ii)$, we have $\theta_x + \theta_y > 5\pi/3$, $\theta_y +
\theta_z > 5\pi/3$, and $\theta_x + \theta_z > 5\pi/3$, hence
$\theta_{x} + \theta_{y} + \theta_{z} > 5 \pi/2$, in contradiction
with the hypothesis.

\end{proof}












\begin{co}

\label{cor3}

If the  convex surface $S$, symmetric about $\o$, has a point $x$
with $\theta_x \leq 5\pi/6,$ then there exists a  diameter of $S$
from $x$ to $-x$.



\end{co}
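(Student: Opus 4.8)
The plan is to transfer the single hypothesis on $x$ into a hypothesis on the pair $\{x,-x\}$, apply Theorem~\ref{3d}$(ii)$ together with the central symmetry, and then strengthen its conclusion by a planar comparison argument patterned on the proof of Theorem~\ref{3d}$(ii)$. First note that the central reflection $\iota\colon S\to S$, $\iota(s)=-s$, is the restriction to $S$ of a Euclidean isometry preserving $S$, hence an isometry of $(S,\rho)$; consequently $\theta_{-x}=\theta_x\le 5\pi/6$ and $\theta_x+\theta_{-x}\le 5\pi/3$. By Theorem~\ref{3d}$(ii)$ applied to $x$ and $-x$, at least one of them is a diametral point of $S$, and since $\iota$ interchanges $x$ and $-x$, \emph{both} are diametral. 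Let $D$ be the length of a diameter of $S$ and suppose, for contradiction, that $\rho(x,-x)<D$.

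Since $\theta_x,\theta_{-x}<\pi$, neither $x$ nor $-x$ lies in the interior of a segment, so neither lies on a diameter except as an endpoint. If every diameter of $S$ had $x$ as an endpoint, then a diameter issuing from the diametral point $-x$ would also have $x$ as its other endpoint, forcing $\rho(x,-x)=D$; and symmetrically if every diameter had $-x$ as an endpoint. I expect the main obstacle to be the following step: to produce a diameter $uv$ avoiding \emph{both} $x$ and $-x$ and, moreover, with $\rho(x,u),\rho(x,v),\rho(-x,u),\rho(-x,v)<D$ --- equivalently, with neither endpoint of $uv$ a farthest point of $x$ or of $-x$. The case that must be excluded is that every diameter of $S$ has an endpoint in the farthest-point set of $x$ or of $-x$; I would rule it out by analysing the digon decomposition of $S$ induced by a diameter through $x$ together with its $\iota$-image.

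Granting such a diameter $uv$: since $\omega_x+\omega_{-x}=4\pi-(\theta_x+\theta_{-x})\ge 7\pi/3>2\pi$ while, by Lemma~4.3, each digon cut off on $S$ by segments from $u$ to $v$ has total interior curvature at most $2\pi$, the points $x$ and $-x$ must lie in distinct digons; hence $x$ and $-x$ are $uv$-separated. From here I would reproduce verbatim the planar comparison construction from the proof of Theorem~\ref{3d}$(ii)$, with $-x$ in the role of $y$: using Lemma~\ref{view-s} one builds a convex quadrilateral in $\Rp$ whose vertices correspond to $u,x,v,-x$ and whose diagonal corresponding to $uv$ has length $\rho(u,v)=D$ and is at least as long as every side of the quadrilateral (here the inequalities $\rho(u,x),\rho(v,x),\rho(u,-x),\rho(v,-x)<D$ and $\rho(x,-x)<D$ are used), so that this diagonal is a diameter of the quadrilateral while neither of the two vertices corresponding to $x$ and $-x$ is a diametral point of it. By Theorem~\ref{23}$(ii)$ the two corresponding Euclidean angles sum to more than $5\pi/6$, and Lemma~\ref{comp} transfers this to $\angle uxv+\angle u(-x)v>5\pi/6$. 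On the other hand, exactly as in the proof of Theorem~\ref{3d}$(ii)$, these geodesic angles satisfy $\angle uxv\le\theta_x/2$ and $\angle u(-x)v\le\theta_{-x}/2$, so $\angle uxv+\angle u(-x)v\le(\theta_x+\theta_{-x})/2\le 5\pi/6$ --- a contradiction. Therefore $\rho(x,-x)=D$, and any length-$D$ segment from $x$ to $-x$ is the required diameter.
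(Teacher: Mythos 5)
Your argument has a genuine, and indeed self-acknowledged, gap at its crucial step. Everything up to ``both $x$ and $-x$ are diametral'' is fine and matches the paper. But the rest hinges on producing a diameter $uv$ of $S$ with $u,v\notin\{x,-x\}$ and, moreover, $\rho(x,u),\rho(x,v),\rho(-x,u),\rho(-x,v)<D$; you only handle the extreme case in which \emph{every} diameter emanates from $x$ (or every one from $-x$), and defer the rest to an unspecified ``analysis of the digon decomposition.'' The obstruction is real: a priori the set of diameters could be exactly $\{xp,\,(-x)(-p)\}$ for some $p\neq -x$, in which case no diameter avoids both $x$ and $-x$ and your comparison quadrilateral cannot be formed; and even when a diameter $uv$ disjoint from $\{x,-x\}$ exists, one of its endpoints could be a farthest point of $x$ (i.e.\ $\rho(x,u)=D$), which destroys the step where you need the diagonal $u'v'$ to be the \emph{unique} diameter of the planar quadrilateral so that Theorem~\ref{23}$(ii)$ applies in contrapositive form. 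What you are trying to establish here --- that on a centrally symmetric convex surface a diametral point must realize the diameter against its antipode --- is precisely the nontrivial fact the paper imports wholesale: its proof is two lines, namely Theorem~\ref{3d}$(ii)$ plus a citation of Proposition~6 of \cite{v00}, which asserts that each diameter of a centrally symmetric convex surface joins diametrally opposite points. Your proposal in effect attempts to reprove (a special case of) that proposition and leaves open exactly its hard part.

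The remainder of your plan (curvature bound $\omega_x+\omega_{-x}\ge 7\pi/3>2\pi$ forcing $x$ and $-x$ into distinct digons, the planar comparison via Lemmas~\ref{comp} and~\ref{view-s}, and the bound $\angle uxv+\angle u(-x)v\le(\theta_x+\theta_{-x})/2\le 5\pi/6$) is a correct transcription of the proof of Theorem~\ref{3d}$(ii)$ and would work \emph{if} the required diameter $uv$ were supplied. To repair the proof as written you must either fill that existence step or, more simply, do as the paper does and invoke the symmetry result from \cite{v00}.
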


\begin{proof}
Since $\theta_x=\theta_{-x}$, we have $\theta_x+\theta_{-x}\le
5\pi/3.$ Now, Theorem \ref{3d} $(ii)$ implies that  $x$ or $x'$ is a
diametral point of $S$. By Proposition 6 in \cite{v00}, each
diameter of $S$ is realized between diametrally opposite points.

\end{proof}









The endpoints of extrinsic and intrinsic diameters of convex bodies
or surfaces are in general distinct.

The hypotheses of Corollaries \ref{cor32} and \ref{cor3}, are the
same. Also,  those of Theorem \ref{3d} and Theorem \ref{3n} might be
simultaneously verified. In these cases, endpoints of both extrinsic
and intrinsic diameters of $S={\rm bd}K$ can be found in the same
subset of $S$ composed by $1$, $2$, or $3$ points.


\bs
 {\bf Conjecture.} In Theorem 3.1 $(ii)$, the inequality
 $\theta_x+\theta_y\le 5\pi/3$ suffices to guarantee
the existence of a diametral point in $\{x, y\}$.

\bs
 {\bf Open questions.} Are the bounds $9\pi/4$ in Theorem 3.1
 $(iii)$ and $5\pi/2$ in Theorem 4.4 $(iii)$ optimal?

\section*{Acknowledgements.} The first author  was partially supported by a Grant-in-Aid for Scientific
Research (C) (No. 17K05222), Japan Society for Promotion of Science.
 The last two authors gratefully acknowledge financial support by NSF of China
(11871192, 11471095). The last three authors direct their thanks to
the Program for Foreign Experts of Hebei Province (No. 2019YX002A).
The research of the last author was also partly supported by the
International Network GDRI ECO-Math.



\medskip

Jin-ichi Itoh

\noindent{\footnotesize School of Education, Sugiyama Jogakuen
University

\noindent 17-3 Hoshigaoka-motomachi, Chikusa-ku, Nagoya, 464-8662
Japan}

{\small \hfill j-itoh@sugiyama-u.ac.jp}

\medskip

Costin V\^\i lcu

\noindent{\footnotesize {\sl Simion Stoilow} Institute of
Mathematics of the Roumanian Academy

\noindent P.O. Box 1-764, 014700 Bucharest, Roumania}

{\small \hfill Costin.Vilcu@imar.ro}

\medskip

Liping Yuan

\noindent{\footnotesize College of Mathematics and Information
Science,

\noindent Hebei Key Laboratory of Computational Mathematics and
Applications,

\noindent Hebei Normal University,

\noindent 050024 Shijiazhuang, P.R. China.}

{\small \hfill lpyuan@hebtu.edu.cn}

\medskip

Tudor Zamfirescu

\noindent{\footnotesize Fachbereich Mathematik, Universit\"at
Dortmund,

\noindent{44221 Dortmund, Germany, {and}

\noindent Roumanian Academy, Bucharest, {and}

\noindent College of Mathematics and Information Science,

\noindent Hebei Normal University,

\noindent 050024 Shijiazhuang, P.R. China.}

{\small \hfill tudor.zamfirescu@mathematik.tu-dortmund.de}

\end{document}